\newtheorem*{theorem1}{Theorem 1}
\newtheorem*{theorem2}{Theorem 2}
\newtheorem{theorem}{Theorem}[section]
\newtheorem{lemma}[theorem]{Lemma}
\theoremstyle{definition}
\theoremstyle{remark}
\newtheorem{remark}[theorem]{Remark}
\numberwithin{equation}{section}
\begin{document}

\def\temptablewidth{0.5\textwidth}

\title{Almost complex structures \\on $(n-1)$-connected $2n$-manifolds}

\author{Huijun Yang}
\address{Hua Loo-Keng Key laboratory of Mathematics, Academy of Mathematics and Systems Science,
Chinese Academy of Sciences, Beijing 100190, China}
\email{yhj@amss.ac.cn}

\subjclass[2000]{55N15, 19L64.}
\keywords{ Almost complex structure, stable almost complex structure, reduced $KU$-group, reduced $KO$-group, real reduction}

\begin{abstract}   Let $M$ be a closed $(n-1)$-connected $2n$-dimensional smooth
manifold with $n\geq 3$. In terms of the system of invariants for
such manifolds introduced by Wall, we obtain necessary
and sufficient conditions for $M$ to admit an almost complex
structure.
\end{abstract}

\maketitle

\section{Introduction}

First we introduce some notations. For a topological space $X$, let $Vect_{C}(X)$ (resp. $Vect_{R}(X)$)
be the set of isomorphic classes of complex (resp. real) vector
bundles on $X$, and let $r\colon Vect_{C}(X)\rightarrow Vect_{R}(X)$
be the real
reduction, which induces the real reduction homomorphism $\tilde{r}\colon \widetilde{K}%
(X)\rightarrow \widetilde{KO}(X)$ from the reduced $KU$-group to the
reduced $KO$-group of $X$.~For a map $f\colon X\rightarrow Y$
between topological spaces $X$~and~$Y$,~denote by $f_{u}^{*}\colon
\widetilde{K}(Y)\rightarrow \widetilde{K}(X)$~and~$f_{o}^{*}\colon
\widetilde{KO}(Y)\rightarrow \widetilde{KO}(X)$~the induced
homomorphisms.

Let $M$ be a $2n$-dimensional smooth manifold with tangent bundle
$TM$. We say that $M$ admits an \emph{almost complex structure}
(resp.~a \emph{stable almost complex structure}) if $TM \in
\mathrm{Im} r$ (resp.~$TM \in  \mathrm{Im} \tilde{r}$). Clearly, $M$
admits an almost complex structure implies that $M$ admits a stable
almost complex structure. It is a classical topic in geometry to
determine which $M$ admits an almost complex structure. See for
instance \cite{wu,eh,mg,su}.
In this paper we determine those closed $(n-1)$-connected $2n$-dimensional smooth manifolds $M$ with $%
n\geq 3$ that admit an almost complex structure.

Throughout this paper, $M$ will be a closed oriented
$(n-1)$-connected $2n$- dimensional smooth manifold with $ n\ge3$.
In \cite{wa}, C.T.C. Wall assigned to each $M$ a system of invariants
as follows.

\shortstack[l]{
1) $H=H^{n}(M;\mathbb{Z})\cong Hom(H_{n}(M;\mathbb{Z});\mathbb{Z}
)\cong\oplus _{j=1}^{k}\mathbb{Z}$, the cohomology group of $M$,  \\\quad with $k$ the $n$-th Betti number of $M$,
}

\shortstack[l]{
2) $I\colon H\times
H\rightarrow \mathbb{Z}$, the intersection form of $M$ which is unimodular and $n$- sym-\\\quad metric, defined by\\
\qquad \qquad\qquad\qquad\qquad $I(x,y)=<x\cup y,[M]>$,\\
\quad where
the homology class $[M]$ is the orientation class of $M$,\\
3) A map $\alpha \colon H_{n}(M;\mathbb{Z})\rightarrow \pi
_{n-1}(SO_{n})$ that assigns each element $x\in H_{n}(M;\mathbb{Z})$
to\\ \quad the
characteristic map $\alpha (x)$ for the normal bundle of the embedded $n$-sphere \\\quad$%
S_{x}^n$ representing $x$.
}

These invariants satisfy the relation (\cite[Lemma 2]{wa})
\begin{equation}\label{gx}
\alpha(x+y)=\alpha(x)+\alpha(y)+I(x,y)~\partial\iota_n,
\end{equation}
where $\partial$ is the boundary homomorphism in the exact sequence
\begin{equation}\label{tl}
\cdots\rightarrow\pi_n(S^n)\xrightarrow{\partial}\pi_{n-1}(SO_n)\xrightarrow{S}\pi_{n-1}(SO_{n+1})\rightarrow\cdots
\end{equation}
of the fiber bundle $SO_n\hookrightarrow SO_{n+1}\rightarrow S^n$,
and $\iota_n\in\pi_n(S^n)$ is the class of the identity map.

Denote by $\chi = S\circ ~\alpha\colon
H_{n}(M;\mathbb{Z})\rightarrow \pi _{n-1}(SO_{n+1})\cong
\widetilde{KO}(S^{n})$ the composition map, then from (\ref{gx}) and
(\ref{tl})
\begin{equation}\label{chi}
\chi =S\circ \alpha \in H^{n}(M;\widetilde{KO}(S^{n}))=Hom(H_{n}(M;%
\mathbb{Z});\widetilde{KO}(S^{n}))
\end{equation}
can be viewed as an $n$-dimensional cohomology class of $M$, with
coefficient in $\widetilde{KO}(S^n)$. It follows from
Kervaire~\cite[Lemma~1.1]{ke}~and~Hirzebruch index Theorem \cite{hi}
that the Pontrjagin classes $p_{j}(M)\in H^{4j}(M;\mathbb{Z})$ of
$M$ can be expressed in terms of the cohomology class $\chi$ and the
index $\tau$ of the intersection form $I$ (when $n$ is even) as
follows (cf. Wall~\cite[p.~179-180]{wa}).

\begin{lemma}{Let $M$ be a closed oriented
$(n-1)$-connected $2n$-dimensional smooth manifold with $ n\ge3$. Then
\begin{equation*} p_{j}(M)=
\begin{cases}
\pm a_{n/4}(n/2-1)!\chi, &  n\equiv
0(mod~4), j=n/4, \\
\frac{a_{n/4}^{2}}{2}((n/2-1)!)^{2}\{1-\frac{
(2^{n/2-1}-1)^{2}}{2^{n-1}-1}\binom{n}{n/2}\frac{B_{n/4}^{2}}{B_{n/2}}\}I(\chi,\chi)
\\+\frac{n!}{2^{n}(2^{n-1}-1)B_{n/2}}\tau, &  n\equiv
0(mod~4), j=n/2,\\
\frac{n!}{2^{n}(2^{n-1}-1)B_{n/2}} \tau, &  n\equiv 2(mod~4), j=n/2,
\end{cases}
\end{equation*}
where
\begin{equation*}a_{n/4}=
\begin{cases}
1, & n\equiv 0~(mod~8),\\
2, & n\equiv 4~(mod~8),
\end{cases}
\end{equation*}
$B_m$ is the $m$-th Bernoulli number.}
\end{lemma}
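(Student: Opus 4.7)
The plan is to exploit the very sparse cohomology of $M$. Since $M$ is $(n-1)$-connected of dimension $2n$, we have $H^{j}(M;\mathbb{Z})=0$ unless $j\in\{0,n,2n\}$, so $p_{j}(M)\in H^{4j}(M;\mathbb{Z})$ is forced to vanish unless $4j=n$ or $4j=2n$. The first case requires $n\equiv 0\pmod 4$ and the second requires $n$ even, which partitions the problem into precisely the three cases in the statement. Only $p_{n/4}(M)$ and $p_{n/2}(M)$ require genuine computation.

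For $p_{n/4}(M)$, I would invoke Wall's handle-theoretic model: up to homotopy, $M\simeq\bigl(\bigvee_{i=1}^{k}S_{i}^{n}\bigr)\cup_{\phi}e^{2n}$, and the inclusion of the $n$-skeleton induces an isomorphism on $H^{n}$. By naturality of Pontryagin classes it suffices to compute $p_{n/4}$ of the restricted tangent bundle on each wedge summand $S_{i}^{n}$, where $TM|_{S_{i}^{n}}$ is classified by the characteristic map $\alpha(x_{i})\in\pi_{n-1}(SO_{n})$. Since Pontryagin classes are stable, only the stabilisation $\chi(x_{i})=S\alpha(x_{i})\in\widetilde{KO}(S^{n})$ matters. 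Kervaire's Lemma~1.1 evaluates the $(n/4)$-th Pontryagin class of a generator of $\widetilde{KO}(S^{n})\cong\mathbb{Z}$ as $\pm a_{n/4}(n/2-1)!$ times the fundamental class of $S^{n}$; assembling these over $i$ yields the identity $p_{n/4}(M)=\pm a_{n/4}(n/2-1)!\,\chi$, where $\chi$ is viewed in $H^{n}(M;\mathbb{Z})$ through the canonical identification $\widetilde{KO}(S^{n})\cong\mathbb{Z}$.

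For $p_{n/2}(M)$ I would apply the Hirzebruch signature theorem $\tau(M)=\langle L_{n/2}(p_{1},\ldots,p_{n/2}),[M]\rangle$. The vanishing of the intermediate $p_{j}$'s collapses $L_{n/2}$ onto at most two monomials: the linear term $p_{n/2}$ (always present when $n$ is even) and, only when $n\equiv 0\pmod 4$, the square $p_{n/4}^{2}$. The coefficient of $p_{n/2}$ in $L_{n/2}$ is the classical value $\tfrac{2^{n}(2^{n-1}-1)B_{n/2}}{n!}$, while the coefficient of $p_{n/4}^{2}$ is read off from Hirzebruch's multiplicative generating function $\prod_{i}\sqrt{x_{i}}/\tanh\sqrt{x_{i}}$. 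Solving the resulting linear relation for $p_{n/2}$ and substituting $\langle p_{n/4}^{2},[M]\rangle=\bigl(a_{n/4}(n/2-1)!\bigr)^{2}I(\chi,\chi)$ from the previous step produces the stated formula.

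The structural ingredients---cohomological sparsity, Wall's handle decomposition, and Kervaire's translation between $\widetilde{KO}$-class and Pontryagin class---are essentially citations. The main obstacle is the last manipulation: extracting the coefficient of $p_{n/4}^{2}$ from $L_{n/2}$ in closed form to produce the precise ratio $\tfrac{(2^{n/2-1}-1)^{2}}{2^{n-1}-1}\binom{n}{n/2}\tfrac{B_{n/4}^{2}}{B_{n/2}}$. This requires a careful computation with the generating series (or, equivalently, with the multiplicative recursion for $L$-polynomials) and meticulous bookkeeping of Bernoulli-number normalisations. One must also fix the sign ambiguity in $p_{n/4}$, which reflects the orientation choice implicit in the isomorphism $\widetilde{KO}(S^{n})\cong\mathbb{Z}$, consistently with Wall's conventions in \cite{wa}.
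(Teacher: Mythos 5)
Your proposal follows essentially the same route as the paper, which states this lemma without an independent proof and derives it from Kervaire's evaluation of the Pontrjagin class of a generator of $\widetilde{KO}(S^{n})$ together with the Hirzebruch signature theorem applied to the only two monomials of $L_{n/2}$ that survive the cohomological sparsity of $M$ (citing Wall, pp.~179--180, for the coefficient bookkeeping). Your outline is correct, and the one step you leave unexecuted --- extracting the coefficient of $p_{n/4}^{2}$ from $L_{n/2}$ in closed form --- is exactly the computation the paper likewise delegates to Wall.
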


Now we can state the main results as follows.

\begin{theorem1} {Let $M$ be a closed oriented $(n-1)$-connected
$2n$-dimensional smooth manifold with $ n\ge3$, $\chi$ be the
cohomology class defined in (\ref{chi}), $\tau$ the index of the intersection form $I$ (when $n$ is even). Then the necessary and
sufficient conditions for $M$ to admit a stable almost complex
structure are:

\shortstack[l]{
1)~$n\equiv 2,~3,~5,~6,~7~(mod~8)$,~or\\
2)~if~$n\equiv 0~(mod~8)\colon \chi \equiv 0~(mod~2)$~and~$\frac{%
(B_{n/2}-B_{n/4})}{B_{n/2}B_{n/4}}\cdot \frac{n\tau }{2^{n}}\equiv 0~(mod%
~2)$,\\
3)~if~$n\equiv
4~(mod~8)\colon \frac{(B_{n/2}+B_{n/4})}{B_{n/2}B_{n/4}}\cdot
\frac{\tau
}{2^{n-2}}\equiv 0~(mod~2)$,\\
4)~if~$n\equiv 1~(mod~8)\colon \chi =0$.}}
\end{theorem1}

\begin{theorem2}{ Let $M$ be a closed oriented $(n-1)$-connected
$2n$-dimensional smooth manifold with $ n\ge3$, $k$ be the $n$-th
Betti number, $I$ be the intersection form, and $p_j(M)$ be the
Pontrjagin class of $M$ as in Lemma 1.1. Then $M$ admits an almost complex
structure if and only if $M$ admits a stable almost complex
structure and one of the following conditions are satisfied:

\shortstack[l]{
1)~If~$n\equiv 0~(mod~4)\colon$~$
4p_{n/2}(M)-I(p_{n/4}(M),~p_{n/4}(M))=8~(k+2)$,\\
2)~if~$n\equiv 2~(mod~8)\colon$ there exists an element~$x\in H^{n}(M;\mathbb{Z
})$ such that \\ \qquad  $x\equiv \chi ~(mod~2)$ and $I(x,
x)=(2(k+2)+p_{n/2}(M))/((n/2-1)!)^{2}$,\\
3)~if~$n\equiv 6~(mod~8)\colon$~there exists an element~$x\in H^{n}(M;\mathbb{Z
})$ such that \\ \qquad  $I(x, x)=(2(k+2)+p_{n/2}(M))/((n/2-1)!)^{2}$,\\
4)~if~$n\equiv 1~(mod~4)\colon$~$2((n-1)!)\mid(2-k)$,\\
5)~if~$n\equiv 3~(mod~4)\colon$~$(n-1)!\mid(2-k)$.}}
\end{theorem2}
\begin{remark}{ i) Since the rational numbers $\frac{(B_{n/2}-B_{n/4})}{%
B_{n/2}B_{n/4}}\cdot \frac{n\tau }{2^{n}}$ and $\frac{(B_{n/2}+B_{n/4})}{%
B_{n/2}B_{n/4}}\cdot \frac{\tau }{2^{n-2}}$ in Theorem 1 can be viewed as $2$-adic integers (see the proof of Theorem 1), it makes sense to take congruent classes modulo 2.

ii) In the cases 2) and 3) of Theorem 2, when the conditions are
satisfied, the almost complex structure on $M$ depends on the
choice of $x$.
}
\end{remark}
This paper is arranged as follows. In \S 2 we obtain presentations
for the groups $\widetilde{KO}(M)$, $\widetilde{K}(M)$ and determine
the real reduction $\tilde{r}\colon \widetilde{K}(M)\rightarrow
\widetilde{KO}(M)$ accordingly. In \S 3 we determine the expression
of $TM\in\widetilde{KO}(M)$ with respect to the presentation of
$\widetilde{KO}(M)$ obtained in \S 2. With these preliminary
results, Theorem 1 and Theorem 2 are established in \S 4.

I would like to thank my supervisor H. B. Duan, Dr. Su and Dr. Lin for their
help with the preparation of this paper.

\section{The real reduction $\tilde{r}\colon\protect\widetilde{K}(M)\rightarrow%
\protect\widetilde{KO}(M)$}

According to Wall \cite{wa}, $M$ is homotopic to
a $CW$ complex $(\vee _{\lambda=1}^{k}S_{\lambda}^{n})\cup
_{f}\mathbb{D}^{2n}$, where $k$ is the $n$-th Betti number of $M$, $\vee _{\lambda=1}^{k}S_{\lambda}^{n}$
is the wedge sum of $n$-spheres which is the $n$-skeleton of $M$ and $f \in \pi_{2n-1}(\vee
_{\lambda=1}^{k}S_{\lambda}^{n})$ is the attaching map of $\mathbb{D}^{2n}$ which is
determined by the intersection form $I$ and the map $\alpha$~(cf. Duan and Wang \cite{dw}).

Let $i\colon \vee _{\lambda=1}^{k}S_{\lambda}^{n}\rightarrow M$ be
the inclusion map of the $n$-skeleton of $M$ and $p\colon M\rightarrow S^{2n}$ be the
map collapsing the $n$-skeleton $\vee _{\lambda=1}^{k}S_{\lambda}^{n}$ to the base point. Then by the
naturality of the Puppe sequence, we have the following exact ladder:

\begin{table}[!htbp]
\begin{tabular}[b]{c@{\hspace{4pt}}c@{\hspace{4pt}}c@{\hspace{4pt}}c@{\hspace{4pt}}c@{\hspace{4pt}}c@{\hspace{4pt}}c@{\hspace{4pt}}c@{\hspace{4pt}}c@{\hspace{4pt}}c}
&$\widetilde{K}(\vee_{\lambda=1}^k S_{\lambda}^{n+1})$ & $\smallskip
\smallskip \overset{\Sigma f_{u}^{\ast }}{\rightarrow } $&
$\widetilde{K}(S^{2n})$ & $\overset{p_u^{\ast }}{ \rightarrow }$ &
$\widetilde{K}(M)$ & $\overset{i_u^{\ast }}{\rightarrow }$ &
$\widetilde{K}(\vee_{\lambda=1}^k S_{\lambda}^{n})$ &
$\overset{f_u^{\ast }}{\rightarrow }$ &
$\widetilde{K}(S^{2n-1})$ \\
(2.1)\quad\quad&$\tilde{r}\downarrow$ &  & $\tilde{r}\downarrow$ &  & $\tilde{r}%
\downarrow$ &  & $\tilde{r}\downarrow$ &  & $\tilde{r}\downarrow $\\
&$\widetilde{KO}(\vee_{\lambda=1}^k S_{\lambda}^{n+1})$ & $\overset{\Sigma f_{o}^{\ast }}{%
\rightarrow }$ & $\widetilde{KO}(S^{2n})$ & $\overset{p_o^{\ast
}}{\rightarrow }$
& $\widetilde{KO}(M)$ & $\overset{i_o^{\ast }}{\rightarrow }$ & $\widetilde{KO}%
(\vee_{\lambda=1}^k S_{\lambda}^{n})$ & $\overset{f_o^{\ast }}{\rightarrow }$ & $\widetilde{KO}%
(S^{2n-1})$
\end{tabular}
\end{table}
where the horizontal homomorphisms $\Sigma f_{u}^{\ast}$, $\Sigma
f_{o}^{\ast }$, $p_u^{\ast}$, $p_o^{\ast }$, $i_u^{\ast}$,
$i_o^{\ast }$ and $f_{u}^{\ast }$, $f_{o}^{\ast }$ are induced by
$\Sigma f$, $p$, $i$ and $f$ respectively, and where $\Sigma$
denotes the suspension.

Let $\mathbb{Z}\beta$ (resp. $\mathbb{Z}_2\beta$) be the infinite
cyclic group (resp. finite cyclic group of order 2) generated by
$\beta$. Then the generators $\omega_{C}^m$ (resp. $\omega_{R}^m$) of the cyclic group $\widetilde{K}(S^m)$ (resp. $\widetilde{KO}(S^m)$) with $m > 0$ can be so chosen such that the
real reduction $\tilde{r}\colon \widetilde{K} (S^m)\rightarrow
\widetilde{KO}(S^m)$ can be summarized as in Table 1 (cf. Mimura~and~Toda~\cite[Theorem~6.1,~p.~211]{mt}).

\begin{table}[!htbp]
\centering
\begin{tabular}[b]{llll}
\multicolumn{4}{c}{Table~1.~ Real reduction~$\tilde{r}\colon \widetilde{K}
(S^m)\rightarrow
\widetilde{KO}(S^m)$}\\[0pt] \hline
\multicolumn{1}{|c}{\rule{0pt}{13pt} $m~(mod~8)$} &
\multicolumn{1}{|c}{$\widetilde{K}(S^{m})$} &
\multicolumn{1}{|c}{$\widetilde{KO}(S^{m})$} &
\multicolumn{1}{|c|}{$\tilde{r}\colon \widetilde{K} (S^m)\rightarrow
\widetilde{KO}(S^m)$}
\\[0pt]
\hline

\multicolumn{1}{|c}{$0$} &
\multicolumn{1}{|l}{$\mathbb{Z}\omega_{C}^m$} &
\multicolumn{1}{|l}{$\mathbb{Z}\omega_{R}^m$} &
\multicolumn{1}{|l|}{$\tilde{r}(\omega_{C}^m)=2\omega_{R}^m$}
\\ \hline

\multicolumn{1}{|c}{$1$} & \multicolumn{1}{|l}{$0$} &
\multicolumn{1}{|l}{$\mathbb{Z}_{2}\omega_{R}^m$} & \multicolumn{1}{|l|}{$\tilde{r}=0$} \\
\hline

\multicolumn{1}{|c}{$2$} &
\multicolumn{1}{|l}{$\mathbb{Z}\omega_{C}^m$} &
\multicolumn{1}{|l}{$\mathbb{Z}_{2}\omega_{R}^m$} &
\multicolumn{1}{|l|}{$\tilde{r}(\omega_{C}^m)=\omega_{R}^m$}
\\ \hline

\multicolumn{1}{|c}{$4$} &
\multicolumn{1}{|l}{$\mathbb{Z}\omega_{C}^m$} &
\multicolumn{1}{|l}{$\mathbb{Z}\omega_{R}^m$} & \multicolumn{1}{|l|}{$\tilde{r}(\omega_{C}^m)=\omega_{R}^m$} \\
\hline

\multicolumn{1}{|c}{$6$} &
\multicolumn{1}{|l}{$\mathbb{Z}\omega_{C}^m$} &
\multicolumn{1}{|l}{$0$} & \multicolumn{1}{|l|}{$\tilde{r}=0$} \\
\hline

\multicolumn{1}{|c}{$3$, $5$, $7$} & \multicolumn{1}{|l}{$0$} &
\multicolumn{1}{|l}{$0$} & \multicolumn{1}{|l|}{$\tilde{r}=0$} \\
\hline
\end{tabular}
\end{table}

Denoted by $t_{ju}^{\ast}\colon
\widetilde{K}(S_j^n)\rightarrow\widetilde{K}(\vee_{\lambda=1}^kS_{\lambda}^n)$~and~$t_{jo}^{\ast}\colon
\widetilde{KO}(S_j^n)\rightarrow\widetilde{KO}(\vee_{\lambda=1}^kS_{\lambda}^n)$
the homomorphisms induced by $t_{j}\colon
\vee_{\lambda=1}^kS_{\lambda}^n\rightarrow S_{j}^n$ which collapses
$\vee_{\lambda\neq j}S_{\lambda}^{n}$ to the base point. Then we
have:

\begin{lemma}{ Let $M$ be a closed oriented
$(n-1)$-connected $2n$-dimensional smooth manifold with $ n\ge3$. Then the presentations of the groups
$\widetilde{K}(M)$ and $
\widetilde{KO}(M)$ as well as the real reduction $\tilde{r}\colon \widetilde{K}%
(M)\rightarrow \widetilde{KO}(M)$ can be given as in Table 2.
\begin{table}[!htbp]
\centering
\begin{tabular}{llll} \multicolumn{4}{c}{Table~2.~Real reduction~$\tilde{r}\colon \widetilde{K}
(M)\rightarrow\widetilde{KO}(M)$}\\[0pt] \hline

\multicolumn{1}{|c}{\rule{0pt}{14pt} $n~(mod\text{ }8)$} &
\multicolumn{1}{|c}{$\widetilde{K}(M)$} &
\multicolumn{1}{|c}{$\widetilde{KO}(M)$} &
\multicolumn{1}{|c|}{$\tilde{r}\colon \widetilde{K} (M)\rightarrow
\widetilde{KO}(M)$} \\
\hline

\multicolumn{1}{|c}{$0$} &
\multicolumn{1}{|l}{$\mathbb{Z}\xi\oplus \bigoplus
_{j=1}^{k}\mathbb{Z}\eta _{j}$} & \multicolumn{1}{|l}{$\mathbb{Z}%
\gamma \oplus \bigoplus _{j=1}^{k}\mathbb{Z}\zeta _{j}$} &
\multicolumn{1}{|l|}{$\tilde{r}(\xi )=2\gamma $, $\tilde{r}(\eta
_{j})=2\zeta _{j}$}
\\ \hline

\multicolumn{1}{|c}{ $1$} &
\multicolumn{1}{|l}{$\mathbb{Z}\xi$} &
\multicolumn{1}{|l}{$\mathbb{Z}_2
\gamma \oplus \bigoplus _{j=1}^{k}\mathbb{Z}_2\zeta _{j}$} & \multicolumn{1}{|l|}{$\tilde{r}(\xi )=\gamma$} \\
\hline

\multicolumn{1}{|c}{ $2$} &
\multicolumn{1}{|l}{$\mathbb{Z}\xi\oplus \bigoplus
_{j=1}^{k}\mathbb{Z}\eta _{j}$} & \multicolumn{1}{|l}{$\mathbb{Z}%
\gamma \oplus \bigoplus _{j=1}^{k}\mathbb{Z}_2\zeta _{j}$} &
\multicolumn{1}{|l|}{$\tilde{r}(\xi )=\gamma $, $\tilde{r}(\eta
_{j})=\zeta _{j}$}
\\ \hline

\multicolumn{1}{|c}{ $4$} &
\multicolumn{1}{|l}{$\mathbb{Z}\xi\oplus \bigoplus
_{j=1}^{k}\mathbb{Z}\eta _{j}$} &
\multicolumn{1}{|l}{$\mathbb{Z}%
\gamma \oplus \bigoplus _{j=1}^{k}\mathbb{Z}\zeta _{j}$} &
\multicolumn{1}{|l|}{$\tilde{r}(\xi)=2\gamma $, $\tilde{r}(\eta _{j})=\zeta _{j}$} \\
\hline

\multicolumn{1}{|c}{ $5$} &
\multicolumn{1}{|l}{$\mathbb{Z}\xi$} &
\multicolumn{1}{|l}{$\mathbb{Z}_{2}\gamma$} &
\multicolumn{1}{|l|}{$\tilde{r}(\xi )=\gamma$} \\ \hline

\multicolumn{1}{|c}{$6$} &
\multicolumn{1}{|l}{$\mathbb{Z}\xi\oplus \bigoplus
_{j=1}^{k}\mathbb{Z}\eta _{j}$} & \multicolumn{1}{|l}{$\mathbb{Z}%
\gamma$} & \multicolumn{1}{|l|}{$\tilde{r}(\xi )=\gamma$,
$\tilde{r}(\eta _{j})=0$} \\ \hline

\multicolumn{1}{|c}{ $3$, $7$} &
\multicolumn{1}{|l}{$\mathbb{Z}\xi$} &
\multicolumn{1}{|l}{$0$} & \multicolumn{1}{|l|}{$\tilde{r}=0$} \\\hline
\end{tabular}
\end{table}\\
where the generators $\xi$,~$\eta_j$,~$\gamma$,~$\zeta_j$,~$1\le j \le
k$, satisfy:
\begin{equation*}
\left\{\begin{aligned}\xi=p_{u}^{\ast}(\omega_{C}^{2n}),~
i_{u}^{\ast}(\eta_j)=t_{ju}^{\ast}(\omega_{C}^n);\\
\gamma=p_{o}^{\ast}(\omega_{R}^{2n}),~
i_{o}^{\ast}(\zeta_j)=t_{jo}^{\ast}(\omega_{R}^n).
\end{aligned}\right.
\end{equation*}
}
\end{lemma}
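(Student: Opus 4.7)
The plan is to read both the group structures of Table~2 and the map $\tilde r$ off the commutative exact ladder (2.1), using the wedge axiom $\widetilde K(\vee_\lambda S^n_\lambda)=\bigoplus_\lambda \widetilde K(S^n)$ and $\widetilde{KO}(\vee_\lambda S^n_\lambda)=\bigoplus_\lambda \widetilde{KO}(S^n)$ (together with their one-fold suspensions) to identify the outer terms. Substituting the values from Table~1 at the residues of $n$, $n+1$, $2n-1$, $2n$ modulo $8$ reduces (2.1), row by row, to a small concrete ladder whose only non-zero entries are copies of $\mathbb{Z}$ and $\mathbb{Z}_2$.

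In each residue class I first argue that the four boundary maps $\Sigma f_u^*$, $f_u^*$, $\Sigma f_o^*$, $f_o^*$ all vanish, so that the two rows of (2.1) collapse to short exact sequences
\begin{equation*}
0\longrightarrow \widetilde K(S^{2n})\longrightarrow \widetilde K(M)\longrightarrow \bigoplus_\lambda \widetilde K(S^n)\longrightarrow 0
\end{equation*}
and its $\widetilde{KO}$-analogue. Most of these vanishings are immediate, because either the source or the target of the relevant boundary map is zero in Table~1, or because the map in question would have to go from a finite group into a torsion-free one. The remaining non-trivial case in the $\widetilde K$-row, namely $n$ odd with $\widetilde K(S^{n+1})\neq 0$, is settled by an Adams-operation argument: $\psi^k$ commutes with $\Sigma f_u^*$ and acts on $\widetilde K(S^{2m})$ as multiplication by $k^m$, and the exponents $n$ and $(n+1)/2$ disagree for all $n\ge 3$, forcing $\Sigma f_u^*=0$. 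Once the four boundary maps vanish, the $\widetilde K$-sequence splits because its quotient is free, while the $\widetilde{KO}$-sequence splits either for the same reason or, when the quotient is $2$-torsion, by choosing each lift $\zeta_j$ to be an element of order $2$ with $i_o^*(\zeta_j)=t_{jo}^*(\omega_R^n)$.

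The action of $\tilde r$ on the named generators is then obtained by chasing the left and right squares of (2.1). Commutativity of the left square with $\xi:=p_u^*(\omega_C^{2n})$ and $\gamma:=p_o^*(\omega_R^{2n})$ gives $\tilde r(\xi)=p_o^*\tilde r(\omega_C^{2n})$, and Table~1 in dimension $2n$ converts this into the entry for $\tilde r(\xi)$ in Table~2. Similarly, the defining relation $i_u^*(\eta_j)=t_{ju}^*(\omega_C^n)$ combined with commutativity yields $i_o^*\tilde r(\eta_j)=t_{jo}^*\tilde r(\omega_C^n)$, and Table~1 in dimension $n$ identifies the right-hand side; since $\ker i_o^*=\mathrm{Im}\,p_o^*$ equals $\mathbb{Z}\gamma$ or $\mathbb{Z}_2\gamma$, the element $\tilde r(\eta_j)$ is pinned down modulo $\gamma$, and the residual ambiguity is absorbed into the choice of the splitting lift $\zeta_j$.

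The main obstacle is the case $n\equiv 1\pmod 8$, where the $\widetilde{KO}$-row of (2.1) has $\mathbb{Z}_2$-summands at every position and neither the torsion argument nor the complex Adams-operation trick annihilates $\Sigma f_o^*$ or $f_o^*$ a priori. To handle this I would invoke the explicit description of $f$ due to Wall (and made precise by Duan--Wang~\cite{dw}) as a sum of Whitehead products plus a contribution from $\alpha$: the Whitehead-product part becomes null-homotopic after one suspension and so kills $\Sigma f_o^*$, while the $\alpha$-contribution to $f_o^*$ factors through the stable class of $\chi$ and can be shown to land in the zero image of the stable $J$-homomorphism in the relevant stem. Producing a compatible splitting of the resulting $\mathbb{Z}_2$-extension in this residue, and carrying out the parallel bookkeeping for the mixed extension $0\to\mathbb{Z}\to\widetilde{KO}(M)\to\bigoplus\mathbb{Z}_2\to 0$ in the $n\equiv 2\pmod 8$ row, is the technically most delicate step of the proof.
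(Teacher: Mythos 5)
Your framework coincides with the paper's: the exact Puppe ladder (2.1), vanishing of the four boundary homomorphisms, splitting of the two resulting short exact sequences, and a diagram chase to read off $\tilde r$ on generators. Several pieces are sound: $f_u^{\ast}=0$ because $\widetilde K(S^{2n-1})=0$; your Adams-operation argument for $\Sigma f_u^{\ast}=0$ when $n$ is odd is a legitimate (and arguably cleaner) substitute for the paper's stable-range argument via Adams' $d$-invariant; and the torsion-versus-free eliminations do dispose of $\Sigma f_o^{\ast}$ and $f_o^{\ast}$ in every residue except $n\equiv 1\ (\mathrm{mod}\ 8)$.

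The gap is that you defer exactly the two points that constitute the substance of the proof. For $n\equiv 2\ (\mathrm{mod}\ 8)$ the relevant extension is $0\to\mathbb Z\gamma\to\widetilde{KO}(M)\to\bigoplus_{j}\mathbb Z_2\to 0$, and ``choosing each lift $\zeta_j$ to be an element of order $2$'' is not a construction but precisely the assertion to be proved: a priori the extension could be nonsplit (as in $0\to\mathbb Z\xrightarrow{2}\mathbb Z\to\mathbb Z_2\to 0$), in which case the table's answer $\mathbb Z\gamma\oplus\bigoplus_j\mathbb Z_2\zeta_j$ would simply be false. The paper's treatment of this single point occupies most of its proof: Hilton--Milnor decomposes $\pi_{2n-1}(\vee_j S^n_j)$, Duan--Wang's Lemma 3 shows the diagonal components $f_j$ of the attaching map lie in $\mathrm{Im}\,J$, Adams' Proposition 7.19 gives $e_{C}(\Sigma^8 f_j)=0$, whence $\Psi_{C}^{-1}(\eta_j)=-\eta_j+2l\xi$ in $\widetilde K(M)$; combining this with $\tilde r\circ\Psi_{C}^{-1}=\Psi_{R}^{-1}\circ\tilde r$ and $\Psi_{R}^{-1}=\mathrm{id}$ yields $2\tilde r(\eta_j-l\xi)=0$ while $i_o^{\ast}\tilde r(\eta_j-l\xi)=t_{jo}^{\ast}(\omega_R^n)\neq 0$ --- that is the order-$2$ lift you need. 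None of these ideas appear in your sketch. The case $n\equiv 1\ (\mathrm{mod}\ 8)$ is also shakier than you suggest: $\mathrm{Im}\,J$ in $\pi_{8k}^{s}$ is $\mathbb Z_2$, not zero, so ``lands in the zero image of the stable $J$-homomorphism'' needs justification (the paper cites Adams, Proposition 7.1, for the vanishing of the induced $KO$-maps here); and the splitting in that residue follows from a short argument you miss entirely: since $\widetilde K(M)\cong\mathbb Z$ is torsion-free and $\widetilde{KO}(M)$ is finite, $2x=\tilde r(c(x))=0$ for every $x\in\widetilde{KO}(M)$, so $\widetilde{KO}(M)$ is a $\mathbb Z_2$-vector space and the extension splits automatically.
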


\begin{proof}{ We assert that

a) the induced homomorphisms $f^{\ast }_u$, $f^{\ast }_o$, $\Sigma
f^{\ast }_u$ and $\Sigma f^{\ast }_o$ in (2.1) are trivial, moreover,

b) the short exact sequences
\begin{gather}
0\rightarrow\widetilde{K}(S^{2n})\xrightarrow{p_u^{\ast
}}\widetilde{K}(M)\xrightarrow{i_u^{\ast
}}\widetilde{K}(\vee_{\lambda=1}^k S_{\lambda}^{n})\rightarrow0\tag{i} \notag \\
0\rightarrow\widetilde{KO}(S^{2n})\xrightarrow{p_o^{\ast
}}\widetilde{KO}(M)\xrightarrow{i_o^{\ast
}}\widetilde{KO}(\vee_{\lambda=1}^k S_{\lambda}^{n})\rightarrow0\tag{ii} \notag
\end{gather}
split.


Denote by $c\colon \widetilde{KO}(X)\rightarrow\widetilde{K}(X)$ the
complexification. Then by (2.1), combining these assertions with
the fact that $\tilde{r}\circ c=2$, all the results in Table
2 are easily verified.

Now we prove assertions a) and b).

Firstly, by the Bott periodicity Theorem \cite{bo}, we may assume
that the horizontal homomorphisms $\Sigma f_{u}^{\ast}$, $\Sigma
f_{o}^{\ast }$, $p_u^{\ast}$, $p_o^{\ast }$, $i_u^{\ast}$,
$i_o^{\ast }$ and $f_{u}^{\ast }$, $f_{o}^{\ast }$ in (2.1) are
induced by $\Sigma^9f$, $\Sigma^8p$, $\Sigma^8i$ and $\Sigma^8f$
respectively, where $\Sigma^j$ denotes the $j$-th iterated
suspension. Note that $\Sigma^9f\in \pi_{2n+8}(\vee
_{\lambda=1}^{k}S_{\lambda}^{n+9})$ and $\Sigma^8f\in
\pi_{2n+7}(\vee _{\lambda=1}^{k}S_{\lambda}^{n+8})$, and the groups
$\pi_{2n+8}(\vee _{\lambda=1}^{k}S_{\lambda}^{n+9})$ and
$\pi_{2n+7}(\vee _{\lambda=1}^{k}S_{\lambda}^{n+8})$ are all in
their stable range, that is $\pi_{2n+8}(\vee
_{\lambda=1}^{k}S_{\lambda}^{n+9})\cong \pi_{2n+7}(\vee
_{\lambda=1}^{k}S_{\lambda}^{n+8})\cong
\oplus_{\lambda=1}^{k}\pi_{n-1}^s$, where $\pi_{n-1}^S$ is the
$(n-1)$-th stable homotopy group of spheres. Thus the fact that
$\Sigma f_{u}^{\ast}$ and $f^{\ast }_u$ are trivial can be deduced
easily from Table 1 and Adams \cite[proposition 7.1]{ad}; the fact
that $\Sigma f^{\ast }_o$ and $f^{\ast }_o$ are trivial when
$n\nequiv{1}~(mod~8)$ follows from Table 1 while the fact that
$\Sigma f^{\ast }_o$ and $f^{\ast }_o$ are trivial when
$n\equiv~1~(mod~8)$ follows from Adams \cite[proposition 7.1]{ad}.
This proves assertion a).

Secondly, (i) of assertion b) is true since
the abelian group $\widetilde{K}(\vee_{\lambda=1}^k
S_{\lambda}^{n})$ is free.

Finally we prove (ii) of assertion b). For the cases
$n\nequiv{1,2}~(mod~8)$ the proof is similar to (i).

Case $n\equiv 1~(mod~8)$. From (2.1), Table 1 and (i) we get that $\widetilde{K}(M)\cong\mathbb{Z}$ and
$\widetilde{KO}(M)$ is a finite group. Therefore, for each $x\in
\widetilde{KO}(M)$, we have $2x=\tilde{r}\circ c(x)=0$, which
implies (ii) of assertion b) in this case.

Case $n\equiv 2~(mod~8)$. By (i), we may write $\widetilde{K}(M)$ as
\begin{equation*}
\widetilde{K}(M)=\mathbb{Z}\xi\oplus \bigoplus
_{j=1}^{k}\mathbb{Z}\eta _{j},
\end{equation*}
where the generators $\xi$,
$\eta_j$, $1\le j\le k$, satisfy
$\xi=p_{u}^{\ast}(\omega_{C}^{2n})$,
$i_{u}^{\ast}(\eta_j)=t_{ju}^{\ast}(\omega_{C}^n)$. By Hilton-Milnor theorem \cite[p. 511]{wh} we know that the group $\pi_{2n-1}(\vee_{j=1}^kS_j^{n})$ can be decomposed as:
\begin{equation*}
\pi_{2n-1}(\vee_{j=1}^kS_j^{n})\cong\oplus_{j=1}^k \pi_{2n-1}(S_{j}^n)\oplus_{1\le i < j\le k}\pi_{2n-1}(S_{ij}^{2n-1}),
\end{equation*}
where $S_{ij}^{2n-1}=S^{2n-1}$, the group $\pi_{2n-1}(S_{j}^n)$ is embedded in $\pi_{2n-1}(\vee_{j=1}^kS_j^{n})$ by the natural inclusion, and the group $\pi_{2n-1}(S_{ij}^{2n-1})$ is embedded by composition with the Whitehead product of certain elements in $\pi_{n}(\vee_{j=1}^kS_j^{n})$. Hence by Duan and Wang \cite[Lemma 3]{dw}, the attaching map $f$ can be decomposed accordingly as:
\begin{equation*}
f=\Sigma_{j=1}^k f_{j} + g,
\end{equation*}
where
\begin{equation*}
f_j\in Im J \subset \pi_{2n-1}(S^n)
\end{equation*}
$J$ being the $J$-homomorphism and $g\in\oplus_{1\le i< j\le
k}\pi_{2n-1}(S_{ij}^{2n-1})$. Moreover, since the suspension of the Whitehead
product is trivial, it follows that the homotopy group
$\pi_{2n+7}(\vee_{j=1}^kS_j^{n+8})$ can be decomposed as:
\begin{equation*}
\pi_{2n+7}(\vee_{j=1}^kS_j^{n+8})\cong \oplus_{j=1}^k
\pi_{2n+7}(S_j^{n+8}),
\end{equation*}
and accordingly $\Sigma^8f$ can be decomposed as:
\begin{equation*}
\Sigma^8f=\oplus_{j=1}^k \Sigma^8 f_j \in\oplus_{j=1}^k
\pi_{2n+7}(S_j^{n+8})
\end{equation*}
with
\begin{equation*}
\Sigma^{8} f_j\in Im J \subset \pi_{2n+7}(S_j^{n+8})\cong \pi_{n-1}^s.
\end{equation*}
Denote by
$e_{C}(\Sigma^8 f_j)$ the $e_{C}$ invariant of $\Sigma^8 f_j$ defined in Adams \cite{ad}, $\Psi_{C}^{-1}\colon\widetilde{K}(M)\rightarrow\widetilde{K}(M)$~and
~$\Psi_{R}^{-1}=id\colon\widetilde{KO}(M)\rightarrow\widetilde{KO}(M)$~ the Adams operations,
~where~$id$~is the identity map.
Then it follows from Adams \cite[Proposition 7.19]{ad} that
\begin{equation*}
e_{C}(\Sigma^8 f_j)=0,
\end{equation*}
for each $1\le j\le k$.
Hence, by considering the map
\begin{equation*}
\tilde{t}_j\colon(\vee _{\lambda=1}^{k}S_{\lambda}^{n+8})\cup
_{\Sigma^8f}\mathbb{D}^{2n+8}\rightarrow S_{j}^{n+8}\cup
_{\Sigma^8f_j}\mathbb{D}^{2n+8}
\end{equation*}
which collapses $\vee_{\lambda\neq j}S_{\lambda}^{n+8}$ to a point, it's easy to see from \cite[proposition 7.5, Proposition 7.8]{ad} and the naturality of Adams operation that
\begin{equation*}
\Psi_{C}^{-1}(\eta_j)=(-1)^{n/2}\eta_j+l\cdot((-1)^n-(-1)^{n/2})\xi\in\widetilde{K}(M)
\end{equation*}
for each $\eta_j$, and for some $l\in\mathbb{Z}$.
Therefore from
\begin{equation*}
\tilde{r}\circ\Psi_{C}^{-1}=\Psi_{R}^{-1}\circ\tilde{r},
\end{equation*}
we have
\begin{equation*}
\Psi_{R}^{-1}\tilde{r}(\eta_j)=-\tilde{r}(\eta_j)+2l\tilde{r}(\xi).
\end{equation*}
That is
\begin{equation*}
2\tilde{r}(\eta_j-l\xi)=0.
\end{equation*}
But from (2.1) and Table 1, we get
\begin{equation*}
i_{o}^{\ast}\tilde{r}(\eta_j-l\xi)=t_{oj}^{\ast}(\omega_{R}^n).
\end{equation*}
That is
\begin{equation*}
\tilde{r}(\eta_j-l\xi)\neq0\in\widetilde{KO}(M).
\end{equation*}
Thus (ii) of assertion b) in this case is established and the proof
is finished. }
\end{proof}
\begin{remark}\label{re}{ Since the induced homomorphisms $i^{\ast}\colon H^{n}(M;\mathbb{Z})\rightarrow%
H^{n}(\vee _{\lambda=1}^{k}S_{\lambda}^{n};\mathbb{Z})$ and $p^{\ast}\colon H^{2n}(S^{2n};\mathbb{Z})%
\rightarrow H^{2n}(M;\mathbb{Z})$ are both isomorphisms, and the
generator $\omega_{C}^{2n}\in\widetilde{K}(S^{2n})$ can be chosen
such that its $n$-th chern class $c_n(\omega_{C}^{2n})=(n-1)!$(cf. Hatcher \cite[p. 101]{ha}), from the naturality of the chern class, we get
\begin{equation*}c_{i}(\xi)=
\begin{cases}
(n-1)!, & i=n,\\
0, &\text{others}.
\end{cases}
\end{equation*}
Similarly,
when $n$ is even, $\eta_j$, $1\le j\le k$, can be chosen such that
\begin{align*}
c_{n/2}(\Sigma_{j=1}^kx_j\eta_j)=(n/2-1)!(x_1,x_2,...,x_k)\in
H^n(M;\mathbb{Z}),
\end{align*}
where $x_j\in\mathbb{Z}$ for all $1\le j\le k$ (since $H^n(M;\mathbb{Z})\cong\oplus_{j=1}^k\mathbb{Z}$, we can write an element $x\in H^n(M;\mathbb{Z})$, under the isomorphism $i^*$, as the form $(x_1,x_2,...,x_k)$ ).}
\end{remark}

\begin{remark}{ As in Remark \ref{re}, if we write $\chi$ as $(\chi_1,...,\chi_k)\in
H^{n}(M;\widetilde{KO}(S^{n}))$, where
\begin{equation*}\chi_j\in\widetilde{KO}(S^{n})\cong
\begin{cases}
\mathbb{Z}, & n\equiv 0~(mod~4),\\
\mathbb{Z}_2, & n\equiv 1,2~(mod~8),\\
0, &\text{others},
\end{cases}
\end{equation*}
then since the tangent bundle of
sphere is stably trivial, it follows that
\begin{align*}
i_o^{\ast }(TM)=\Sigma_{j=1}^k\chi_jt_{jo}^{\ast}(\omega_{R}^n).
\end{align*}}
\end{remark}

\section{The tangent bundle of $M$}
Denote by $\dim_{c}\alpha$ the dimension of $\alpha\in Vect_{C}(M)$.
When $n\equiv0~(mod~4)$, we set
\begin{align*}
\hat{A}(M)&=~ <\hat{\mathfrak{A}}(M),[M]>, \\
\hat{A}_{C}(M)&= ~<ch(TM\otimes \mathbb{C})\cdot
\hat{\mathfrak{A}}(M),[M]>,\\
\hat{A}_{\chi}(M)&=~ <ch(\Sigma_{j=1}^{k}\chi_j\eta_j)\cdot
\hat{\mathfrak{A}}(M),[M]>,
\end{align*} where $ch$ denotes the
chern character, and $\hat{\mathfrak{A}}(M)$ is the
$\mathfrak{A}$-class of $M$ (cf. Atiyah and Hirzebruch \cite{ah}).
It follows from the differentiable Riemann-Roch theorem (cf. Atiyah
and Hirzebruch \cite{ah}) that $\hat{A}(M)$, $\hat{A}_{C}(M)$ and
$\hat{A}_{\chi}(M)$ are all integers. In particular,
$\hat{A}_{\chi}(M)$ is even when $\chi\equiv0~(mod~2)$.

Using the notation above, we get
\begin{lemma}{ Let $M$ be a closed oriented
$(n-1)$-connected $2n$-dimensional smooth manifold with $ n\ge3$. Then $TM$ can be expressed by the generators $\gamma$,~$\zeta_j$,~$1\le j\le
k$ of $\widetilde{KO}(M)$ as
follows:
\begin{equation*}TM=
\begin{cases}
l\gamma+\Sigma_{j=1}^k \chi_j\zeta_j, & n\equiv0, 2, 4~(mod~8),\\
l\gamma, & n\equiv6~(mod~8),\\
\Sigma_{j=1}^k \chi_j\zeta_j, & n\equiv1~(mod~8),\\
0, & n\equiv3,5,7~(mod~8),
\end{cases}
\end{equation*}
where
\begin{equation*}l=
\begin{cases}
\hat{A}_{C}(M)+(\Sigma_{j=1}^ka_{n/4}\chi_j\dim_{c}\eta_j-2n)\hat{A}(M)-a_{n/4}\hat{A}_{\chi}(M),
& n\equiv0~(mod~4),\\
-\frac{p_{n/2}(M)}{2((n-1)!)}, & n\equiv2~(mod~4).
\end{cases}
\end{equation*}}
\end{lemma}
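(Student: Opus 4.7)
The plan is to pin down the decomposition of $TM$ inside the presentation of $\widetilde{KO}(M)$ given in Lemma 2.1, and then determine the remaining coefficient $l$ case-by-case via characteristic-class computations.

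By Remark 2.4, $i_o^*(TM)=\sum_j\chi_j t_{jo}^*(\omega_R^n)=i_o^*\bigl(\sum_j\chi_j\zeta_j\bigr)$, so $TM-\sum_j\chi_j\zeta_j\in\ker i_o^*=\mathrm{Im}(p_o^*)$ from the split short exact sequence (ii) in the proof of Lemma 2.1. Since $\mathrm{Im}(p_o^*)$ is cyclic on $\gamma$, we get $TM=l\gamma+\sum_j\chi_j\zeta_j$ for some coefficient $l$, integer or in $\mathbb{Z}_2$ according to the table. The cases $n\equiv 3,7\pmod 8$ collapse at once because $\widetilde{KO}(M)=0$, and the $\chi_j$ automatically vanish whenever $\widetilde{KO}(S^n)=0$ (so the second summand drops for $n\equiv 5,6\pmod 8$). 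For the remaining odd cases $n\equiv 1,5\pmod 8$, I would argue $l=0$ using a mod-$2$ characteristic-class detection: since $n$ is odd the intersection form on $H^n(M;\mathbb{Z})$ is skew-symmetric, so $k$ is even, whence $\chi(M)=2-k\equiv 0\pmod 2$. The generator $\gamma$ comes from $\widetilde{KO}(S^{2n})\cong\mathbb{Z}_2$ (as $2n\equiv 2\pmod 8$) and is detected by the top Stiefel--Whitney class, so $w_{2n}(TM)=\chi(M)\bmod 2=0$ forces the $\gamma$-coefficient to vanish; in the $n\equiv 1$ case one further notes that each $\zeta_j$, being stably pulled back from the $n$-skeleton, contributes trivially to $w_{2n}$.

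For $n\equiv 0\pmod 4$ the plan is to invoke the differentiable Riemann--Roch theorem. Table 2 together with $\tilde r\circ c=2$ and the naturality of $c$ under $p^*,i^*$ gives $c(\gamma)=\xi$ and $c(\zeta_j)=a_{n/4}\eta_j$, so $TM\otimes\mathbb{C}=l\xi+a_{n/4}\sum_j\chi_j\eta_j$ in $\widetilde K(M)$. Remark 2.3 together with Newton's identity pins down $ch(\xi)$ (concentrated in top degree) and the degree-$n$ piece $ch_{n/2}(\eta_j)=(-1)^{n/2-1}e_j$; the only remaining unknown is the top-degree component $ch_n(\eta_j)$, which is exactly what $\hat A_\chi(M)$ is designed to absorb. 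Evaluating $ch(TM\otimes\mathbb{C})\cdot\hat{\mathfrak{A}}(M)$ against $[M]$ then produces a single linear equation in $l$, whose solution is the claimed formula.

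For $n\equiv 2\pmod 4$ the analogous computation is easier because $I$ is skew-symmetric and all Pontrjagin classes of $M$ vanish outside top degree. Table 2 gives $c(\gamma)=2\xi$ and $c(\zeta_j)=0$ (the latter as $\zeta_j$ is 2-torsion while $\widetilde K(M)$ is torsion-free), so $TM\otimes\mathbb{C}=2l\xi$; matching the top Chern character $ch_n(2l\xi)$ against $ch_n(TM\otimes\mathbb{C})=p_{n/2}(M)/(n-1)!$ (a one-line Newton calculation from the single nonzero Pontrjagin class) solves to $l=-p_{n/2}(M)/(2(n-1)!)$. The main obstacle is the bookkeeping for $n\equiv 0\pmod 4$: expanding $ch(\eta_j)$ to top degree introduces the unknown integer $c_n(\eta_j)$, and the computation only closes once one recognizes that $\hat A_\chi(M)$ absorbs it by definition; the odd-$n$ cases similarly rely on the even-$k$ fact and on a mod-$2$ characteristic-class detection of the $\gamma$-summand.
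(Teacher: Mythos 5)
Your setup (reducing to $TM=l\gamma+\sum_j\chi_j\zeta_j$ via $i_o^*$ and the split sequence) and your treatment of the even cases coincide with the paper's: for $n\equiv0\pmod4$ the paper complexifies, notes $c(\gamma)=\xi$ and $c(\zeta_j)=a_{n/4}\eta_j$, and solves a single Riemann--Roch equation in which $\hat A_{\chi}(M)$ absorbs the unknown top Chern character of the $\eta_j$; for $n\equiv2\pmod4$ it matches the top Chern class of $TM\otimes\mathbb{C}=2l\xi$. Those parts are fine.

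The genuine gap is in the odd cases $n\equiv1,5\pmod8$. Your plan is to kill the coefficient $l\in\mathbb{Z}_2$ of $\gamma$ by a mod~$2$ characteristic-class detection, claiming that $\gamma=p_o^*(\omega_R^{2n})$ is detected by $w_{2n}$. This is false for $n\ge3$: by Table~1 the generator of $\widetilde{KO}(S^{2n})\cong\mathbb{Z}_2$ (here $2n\equiv2\pmod8$) is $\omega_R^{2n}=\tilde r(\omega_C^{2n})$, so $w_{2n}(\omega_R^{2n})\equiv c_n(\omega_C^{2n})=(n-1)!\equiv0\pmod2$, and all lower Stiefel--Whitney classes vanish as well since the bundle is pulled back from a sphere. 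Thus $w_{2n}(TM)=e(M)\bmod2=0$ gives no information about $l$ (and the side claim that each $\zeta_j$ contributes trivially to $w_{2n}$ is also unsubstantiated). The element $l\gamma$ is simply not visible to ordinary mod~$2$ characteristic classes; some $KO$-theoretic or $J$-homomorphism input is unavoidable. The paper supplies exactly this: for $n\equiv1\pmod4$ it quotes Milnor--Kervaire \cite[Lemma~1]{mk} together with Adams \cite[Theorem~1.3]{ad} to conclude that $\chi=0$ forces $TM=0\in\widetilde{KO}(M)$, which settles $n\equiv5\pmod8$ (where $\chi=0$ automatically) and the subcase $\chi=0$ of $n\equiv1\pmod8$. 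Moreover, for $n\equiv1\pmod8$ with $\chi\neq0$ the paper does \emph{not} prove $l=0$ for a fixed basis: it replaces $\zeta_\lambda$ by $\zeta_\lambda+\gamma$ for some $\lambda$ with $\chi_\lambda\neq0$, absorbing the possible $\gamma$-summand into the generators; the statement $TM=\sum_j\chi_j\zeta_j$ is therefore about a suitable choice of generators, not an identity you can hope to verify by evaluating invariants of $TM$ against one predetermined basis. You would need to either import the Milnor--Kervaire/Adams argument or reproduce this change-of-basis step to close these cases.
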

\begin{proof}{ Case $n\equiv0~(mod~8)$. By Remark 2.3, we may
suppose that
\begin{equation*}
TM=l\gamma+\Sigma_{j=1}^k \chi_j\zeta_j\in\widetilde{KO}(M),
\end{equation*}
where $l\in\mathbb{Z}$. Hence from $\tilde{r}\circ c=2$ and Table 2, we have
\begin{align}\label{c}
c(TM)&=TM\otimes\mathbb{C}\\
&=l\xi+\Sigma_{j=1}^k
\chi_j\eta_j\in\widetilde{K}(M).\notag
\end{align}
Now if we regard $\xi$ and $\chi_j$ as complex vector bundles, then
from (\ref{c}) we have
\begin{equation*}
TM\otimes\mathbb{C}\oplus\varepsilon^s\cong l\xi\oplus\bigoplus_{j=1}^k\chi_j\eta_j\oplus\varepsilon^t,
\end{equation*}
for some $s,~t\in\mathbb{Z}$ satisfying
\begin{equation*}
s-t~=~l\cdot
\dim_{c}\xi+\Sigma_{j=1}^k\chi_j\dim_{c}\eta_j-2n,
\end{equation*}
where $\varepsilon^j$ is the trivial complex vector bundle of dimension $j$. Thus we have
\begin{align*}
\hat{A}_{C}(M)=&-(l\cdot
\dim_{c}\xi+\Sigma_{j=1}^k\chi_j\dim_{c}\eta_j-2n)\hat{A}(M)\\
&\text{ } +<ch(l\xi+\Sigma_{j=1}^k \chi_j\eta_j)\cdot
\hat{\mathfrak{A}}(M),[M]>,
\end{align*}
that is
\begin{equation*}
l=\hat{A}_{C}(M)+(\Sigma_{j=1}^k\chi_j\dim_{c}\eta_j-2n)\hat{A}(M)-\hat{A}_{\chi}(M).
\end{equation*}
Cases $n\equiv2, 4, 6~(mod~8)$ can be proved by the same way
as above. Note that in the case
$n\equiv2~(mod~4)$ the calculation of $\hat{A}_{C}(M)$ is
replaced by the calculation of the $n$-th chern class of $TM\otimes\mathbb{C}$.

Case $n\equiv1~(mod~4)$. From~Milnor~and~Kervaire~\cite[Lemma~1]{mk}~and~Adams~\cite[Theorem~1.3]{ad}, we get~that~$\chi=0$~implies ~$TM=0\in\widetilde{KO}(M)$. Then

i) case $n\equiv5~(mod~8)$. $TM=0\in\widetilde{KO}(M)$ because
$\chi=0$ in this case.

ii) case $n\equiv1~(mod~8)$. By Remark 2.3, we may suppose that
\begin{align*}
TM=l\gamma+\Sigma_{j=1}^k \chi_j\zeta_j,
\end{align*}where
$l\in\mathbb{Z}_2$. Then if $\chi=0$, we have $l=0$ because $TM=0$.
If $\chi\neq0$ and $l\neq0$, suppose that $\chi_{\lambda}\neq0$ for
some $1\le \lambda \le k$, set
\begin{equation*}\zeta_{j}^{\prime}=
\begin{cases}
\zeta_{j} & \text{if } j\neq\lambda, \\
\zeta_{j}+\gamma & \text{if } j=\lambda. \\
\end{cases}
\end{equation*}
Hence $\gamma$, $\zeta_{j}^{\prime}$, $1\le j\le k$, which satisfy
the conditions in Lemma 2.1, are also the generators of
$\widetilde{KO}(M)$, and we have $TM=\Sigma_{j=1}^k
\chi_j\zeta_j^{\prime}$. This implies that the generators $\gamma$,
$\zeta_{j}$, $1\le j\le k$, of $\widetilde{KO}(M)$ in Lemma 2.1 can
always be chosen such that $TM=\Sigma_{j=1}^k \chi_j\zeta_j$.

Case $n\equiv3~(mod~4)$. $TM=0$ because $\widetilde{KO}(M)=0$
in this case.
}
\end{proof}

\section{Almost complex structure on $M$}
We are now ready to prove Theorem 1 and Theorem 2.

\begin{proof}[Proof of Theorem 1.]{ Cases 1) and 2) $n\equiv0~(mod~4)$. In these cases, we get that (cf. Wall~\cite[p.~179-180]{wa})
\begin{align*}
\hat{A}(M)& =-\frac{B_{n/2}}{2(n!)}p_{n/2}(M)+\frac{1}{2}\{\frac{B_{n/4}^{2}%
}{4((n/2)!)^{2}}+\frac{B_{n/2}}{2(n!)}\}I(p_{n/4}(M),p_{n/4}(M)), \\
\hat{\mathfrak{A}}(M)&
=1-\frac{B_{n/4}}{2((n/2)!)}p_{n/4}(M)+\hat{A}(M),\\
ch(TM\otimes \mathbb{C})& =2n+(-1)^{n/4+1}\frac{p_{n/4}(M)}{(n/2-1)!}+\frac{%
I(p_{n/4}(M), p_{n/4}(M))-2p_{n/2}(M)}{2( (n-1)!)}.
\end{align*}
Hence by Lemma 1.1 we have
\begin{align}\label{a}
\hat{A}_{C}(M)= \text{ }& 2n\{1+\frac{1}{B_{n/2}}+\frac{(2^{n-1}-1)}{(2^{n/2}-1)^{2}}%
\cdot \frac{(-1)^{n/4}B_{n/2}-B_{n/4}}{B_{n/2}B_{n/4}}\}\hat{A}(M)   \\
& +\frac{1}{(2^{n/2}-1)^{2}}\cdot \frac{(-1)^{n/4}B_{n/2}-B_{n/4}}{%
B_{n/2}B_{n/4}}\cdot \frac{n\tau }{2^{n}}.\notag
\end{align}%
Moreover since the denominator of $B_{m}$, when written as the most simple fraction, is always square free and divisible
by 2 (cf. Milnor~\cite[p.
284]{ms}), we may set $B_m~=~b_m/(2c_m)$, where $c_{m}$ and $b_{m}$ are odd integers. Then multiply each side of (\ref{a}) by $(2^{n/2}-1)^2\cdot b_{n/2} \cdot b_{n/4}$, we get that
\begin{align*}
(2^{n/2}-1)^2 b_{n/2}  b_{n/4} \hat{A}_{C}(M)=&~2n\{(2^{n/2}-1)^2\cdot b_{n/2} \cdot b_{n/4}
+ 2 (2^{n/2}-1)^2  b_{n/4} c_{n/2}\\
& + 2(2^{n-1}-1)((-1)^{n/4}b_{n/2}c_{n/4}-b_{n/4}c_{n/2})\}\hat{A}(M)\\
& +2((-1)^{n/4}b_{n/2}c_{n/4}-b_{n/4}c_{n/2})\frac{n\tau}{2^n}.
\end{align*}
Since $\hat{A}_{C}(M)$ and $\hat{A}(M)$ are integers and $(2^{n/2}-1)^2\cdot b_{n/2} \cdot b_{n/4}$ is an odd integer, it follows that 
$$\frac{(-1)^{n/4}B_{n/2}-B_{n/4}}{%
B_{n/2}B_{n/4}}\cdot \frac{n\tau }{2^{n}}$$ 
is a 2-adic integer, and hence
\begin{align*}
\hat{A}_{C}(M)\equiv 0~(mod~ 2)\iff \frac{(-1)^{n/4}B_{n/2}-B_{n/4}}{%
B_{n/2}B_{n/4}}\cdot \frac{n\tau }{2^{n}}\equiv 0~(mod~2) .
\end{align*}
Then by combining these facts with Lemma 2.1 and Lemma 3.1, one verifies the
results in these cases.

Cases 3) and 4) $n\nequiv{0}~(mod~4)$ can be deduced easily
from Lemma 2.1 and Lemma 3.1. }
\end{proof}

To prove Theorem 2, we need the following lemma (see Sutherland~\cite{su}
for the proof).

\begin{lemma}{Let $N$ be a closed smooth $2n$%
-manifold. Then $N$ admits an almost complex structure if and only if it admits a stable almost complex
structure $\alpha$  satisfying
$c_{n}(\alpha )=e(N)$, where $e(N)$ is the Euler class of $N$.}
\end{lemma}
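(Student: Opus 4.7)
The ``only if'' direction is immediate: if $N$ admits an almost complex structure then $TN$ itself is a complex rank-$n$ bundle, so $\alpha=TN$ is a stable almost complex structure and $c_{n}(\alpha)=c_{n}(TN)=e(TN)=e(N)$.

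For the converse, I would begin from a complex bundle $\alpha$ with $r(\alpha)$ stably isomorphic to $TN$ and $c_{n}(\alpha)=e(N)$, and construct a rank-$n$ complex bundle $\beta$ with $r(\beta)\cong TN$ as oriented real bundles. After adding a trivial summand if necessary we may assume $\alpha$ has complex rank $m\ge n$. Step~1 (destabilize): lift the classifying map $N\to BU(m)$ down the tower $BU(n)\to BU(n+1)\to\cdots\to BU(m)$, whose consecutive stages are fibrations with fibre $U(k{+}1)/U(k)\cong S^{2k+1}$. Since $S^{2k+1}$ is $2k$-connected and $2k\ge 2n=\dim N$ at every stage $k\ge n$, cell-by-cell obstruction theory produces the lift: the obstruction to extending over an $(i{+}1)$-cell lies in $H^{i+1}(N;\pi_{i}(S^{2k+1}))$, and $\pi_{i}(S^{2k+1})=0$ for all $i+1\le 2n$. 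The resulting rank-$n$ complex bundle $\beta$ satisfies $\beta\oplus\varepsilon^{m-n}\cong\alpha$ by construction (since $BU(n)\to BU(m)$ classifies $\gamma_{n}\oplus\varepsilon^{m-n}$), so in particular $c_{n}(\beta)=c_{n}(\alpha)=e(N)$.

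Step~2 (identify $r(\beta)$ with $TN$): both are oriented real rank-$2n$ bundles on the $2n$-complex $N$, and their classes in $\widetilde{KO}(N)$ agree. I would invoke the classical fact, coming from the fibration $S^{2n}\to BSO(2n)\to BSO(2n{+}1)$, that two such stably isomorphic oriented bundles on a $2n$-complex are isomorphic if and only if their Euler classes coincide; the set of lifts of a map to $BSO(2n{+}1)$ is a torsor for $H^{2n}(N;\pi_{2n}(S^{2n}))=H^{2n}(N;\mathbb{Z})$, and the translation action is precisely the difference of Euler classes. Using $e(r(\beta))=c_{n}(\beta)=e(N)=e(TN)$ then yields $r(\beta)\cong TN$, producing the almost complex structure on $N$.

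The hardest part of the plan is the destabilization in Step~1: one has to push the lift through a whole $m-n$-step tower of $S^{2k+1}$-fibrations, which ultimately rests on the connectivity estimate $\pi_{i}(S^{2k+1})=0$ for $i\le 2k$ combined with $k\ge n$. A subsidiary concern is that the stable equivalence $r(\alpha)\sim TN$ inside $\widetilde{KO}(N)$ must be tracked as a stable equivalence of \emph{oriented} real bundles so that the Euler-class comparison in Step~2 is legitimate; this is automatic since complex structures induce canonical orientations compatible with the stabilization.
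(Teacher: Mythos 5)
The paper does not actually prove this lemma; it quotes it from Sutherland \cite{su}, so there is no internal proof to compare against. Your overall strategy --- destabilize the stable complex structure to a rank-$n$ complex bundle $\beta$ over the $2n$-dimensional complex $N$, then compare $r(\beta)$ with $TN$ through the single remaining obstruction, which is detected by Euler classes --- is the standard argument and is essentially Sutherland's. The ``only if'' direction and Step 1 (the lift through the tower of $S^{2k+1}$-fibrations) are correct as written.

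Step 2, however, rests on a claim that is false as stated. For the fibration $S^{2n}\to BSO(2n)\to BSO(2n+1)$ the fibre inclusion classifies the tangent bundle $TS^{2n}$, so the Euler class restricts to the fibre as \emph{twice} a generator of $H^{2n}(S^{2n};\mathbb{Z})$. Consequently, if two lifts of the same map to $BSO(2n+1)$ differ by the class $\delta\in H^{2n}(N;\pi_{2n}(S^{2n}))$, then $e(E_1)-e(E_2)=\pm 2\delta$, not $\pm\delta$: the translation action is the difference of Euler classes only up to the factor $\chi(S^{2n})=2$. Hence your ``classical fact'' fails for a general $2n$-complex: equality of Euler classes only forces $2\delta=0$, and stably isomorphic oriented rank-$2n$ bundles with equal Euler classes need not be isomorphic when $H^{2n}$ contains $2$-torsion. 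The argument survives here only because $N$ is a closed manifold that is orientable (forced by $w_1(N)=0$, which follows from the existence of a stable almost complex structure), so that $H^{2n}(N;\mathbb{Z})$ is free and $2\delta=0$ does imply $\delta=0$; this hypothesis must be invoked explicitly. You should also record the intermediate step that the stable isomorphism $r(\beta)\oplus\varepsilon^{s}\cong TN\oplus\varepsilon^{s}$ descends to $r(\beta)\oplus\varepsilon\cong TN\oplus\varepsilon$ (uniqueness of destabilization in ranks exceeding $\dim N+1$), so that $r(\beta)$ and $TN$ really are two lifts of one and the same map to $BSO(2n+1)$. With these two repairs the proof is complete.
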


\begin{proof}[Proof of Theorem 2.]{Firstly,~it follows from Lemma~4.1 that~$M$~admits an almost complex structure if and only if there exists an element~$\alpha\in\widetilde{K}(M)$~such that
\begin{equation}
\label{jf} \left\{ \begin{aligned} \tilde{r}(\alpha)&=TM
\in\widetilde{KO}(M),\\
c_n(\alpha)&=e(M).
\end{aligned} \right.
\end{equation}
Secondly, if there exists an element~$\alpha \in \widetilde{K}(M)$~such that~$\tilde{r}(\alpha
)=TM\in
\widetilde{KO}(M)$,~then we have the following identity~(cf.~Milnor~\cite[p.~177]{ms}):
\begin{equation}\label{ch}
(\sum_j(-1)^jc_j(\alpha))\cdot(\sum_jc_j(\alpha))=\sum_j(-1)^jp_j(M).
\end{equation}
Now we prove Theorem 2 case by case.

Case 1) $n\equiv0~(mod~4)$. In this case $e(M)=k+2$.~From Lemma 4.1
we know that $M$ admits an almost complex structure if and only if
there exists an element~$\alpha\in\widetilde{K}(M)$~such that
(\ref{jf}) is satisfied. Now (\ref{ch}) becomes
\begin{equation*}
(1+c_{n/2}(\alpha)+c_{n}(\alpha))\cdot(1+c_{n/2}(\alpha)+c_{n}(\alpha))=1+(-1)^{n/4}p_{n/4}(M)+p_{n/2}(M),
\end{equation*}
it follows that
\begin{equation*}
c_{n/2}(\alpha)=(-1)^{n/4}\frac{1}{2}p_{n/4}(M),
\end{equation*}
hence
\begin{equation*}
c_{n}(\alpha)=\frac{1}{2}p_{n/2}(M)-\frac{1}{8}I(p_{n/4}(M),~p_{n/4}(M)).
\end{equation*}
Therefore from~(\ref{jf})~we get that,~$M$
admits an almost complex structure if and only if ~$M$~admits a stable almost complex structure and satisfies
\begin{equation*}
4p_{n/2}(M)-I(p_{n/4}(M), p_{n/4}(M))=8(k+2).
\end{equation*}

Case 2) $n\equiv2~(mod~8)$. In this case $e(M)=k+2$.~Set $\alpha
=l\xi+\Sigma_{j=1}^kx_j\eta_j\in\widetilde{K}(M)$ where
$l\in\mathbb{Z}$ is the integer as in Lemma 3.1 and
$x_j\in\mathbb{Z}$,~such that~$x_j\equiv\chi_j~(mod~2)$. Then from
Lemma 2.1 and Lemma 3.1, we know that
$\tilde{r}(\alpha)=TM\in\widetilde{KO}(M)$. Hence by~(\ref{jf}), we
see that $M$~admits an almost complex structure if and only if
\begin{equation*}
\left\{ \begin{aligned} &\alpha =l\xi+\Sigma_{j=1}^kx_j\eta_j\in\widetilde{K}(M),\\
&c_n(\alpha)=e(M).
\end{aligned} \right.
\end{equation*}
Let~$x=(x_1,x_2,...,x_k)\in H^n(M;\mathbb{Z})$.~Then by Remark~2.2
\begin{equation*}
c_{n/2}(\alpha)=(n/2-1)!x.
\end{equation*}
Now (\ref{ch}) is
\begin{equation*}
(1-c_{n/2}(\alpha)+c_{n}(\alpha))\cdot(1+c_{n/2}(\alpha)+c_{n}(\alpha))=1-p_{n/2}(M),
\end{equation*}
therefore
\begin{align*}
c_{n}(\alpha)&=\frac{1}{2}(I(c_{n/2}(\alpha),c_{n/2}(\alpha))-p_{n/2}(M))\\
&=\frac{1}{2}\{((n/2-1)!)^{2}I(x,~x)-p_{n/2}(M)\}.
\end{align*}
Thus it follows from ~(\ref{jf}) that~$M$~admits an almost complex structure if and only if there exists an element~$x\in
H^n(M;\mathbb{Z})$~such that
\begin{equation*}
\left\{ \begin{aligned}&x\equiv\chi~(mod~2),\\
&I(x, x)=(2(k+2)+p_{n/2}(M))/((n/2-1)!)^{2}.
\end{aligned} \right.
\end{equation*}

Case 3) $n\equiv6~(mod~8)$. The proof is similar to the proof of case 2).

Case 4) $n\equiv1~(mod~4)$. Now $e(M)=2-k$.~
From~(\ref{jf}),~Lemma~2.1,~Lemma~3.1~and Remark~2.2, we see that
$M$ admits an almost complex structure if and only if
\begin{equation*}
\left\{ \begin{aligned}
&\chi =0,\\
&\alpha =2a\xi,\\
&2a(n-1)! =2-k,
\end{aligned} \right.
\end{equation*}
where~$a\in\mathbb{Z}$. Hence by Lemma 3.1 and Lemma 2.1, $M$~admits
an almost complex structure if and only if~$M$~admits a stable
almost complex structure and
\begin{equation*}
2(n-1)! \mid (2-k).
\end{equation*}

Case 5) $n\equiv3~(mod~4)$. The proof is similar to the proof of case 4).
}
\end{proof}

\end{document}